\documentclass[12pt]{article}
\usepackage{amsmath,amsfonts,amsthm,amssymb,blindtext
,graphicx,pifont,fncylab,theoremref,sectsty,tocloft,titlesec,enumitem,
fmtcount,titletoc}
\textwidth6.5in
\textheight8.5in
\oddsidemargin0in
\evensidemargin0in
\topmargin-0.5in
\pagestyle{plain}
\date{}
\newtheorem{thm}{Theorem}
\newtheorem{lem}{Lemma}

\newtheorem{dfn}{Definition}

\newtheorem{rmk}{Remark}
\newtheorem{prp}{Proposition}
\newcommand{\C}{\Bbb C}

\begin{document}

\baselineskip 20pt

\title{A logarithmic characterization of Arakelian sets}

\author{Grigorios Fournodavlos,\footnote{Department of Mathematics \& Applied Mathematics, University of Crete, Voutes Campus, 70013 Heraklion, Greece, gfournodavlos@uoc.gr.} \footnote{Institute of Applied and Computational Mathematics,
FORTH, 70013 Heraklion, Greece.}\;\; Vassili Nestoridis,\footnote{Department of Mathematics, University of Athens, Panepistimioupolis, Athens 15784, Greece, vnestor@math.uoa.gr.}\;\; Spyros Pasias\footnote{Department of Mathematics \& Applied Mathematics, University of Crete, Voutes Campus, 70013 Heraklion, Greece, spyrospasias@uoc.gr.}}

\maketitle

\begin{abstract}
Arakelian's classical approximation theorem \cite{Ar} gives necessary and sufficient conditions such that functions can be uniformly approximated in (unbounded) closed sets $F\subset \mathbb{C}$ by entire functions. The conditions are purely topological and concern the connectedness of the complement of $F$. We give a new characterization of Arakelian sets in terms of logarithmic branches of functions $f\in A(F)$, which are continuous in $F$ and holomorphic in its interior $F^\circ$. Our proof is based on a contradiction argument and the counterexample function that we use is furnished by the Weierstrass factorization theorem. 
\end{abstract}

\section{Introduction}

Approximation theorems (Runge \cite{Run}, Mergelyan \cite{Mer}, Arakelian \cite{Ar}) play a central role in the field of complex approximation, since they give necessary and sufficient conditions for functions to be uniformly approximated on a given set by model functions, such as polynomials, rational functions or more generally entire/meromorphic functions. Our goal in the present paper is to give a new characterization of closed sets $F\subset \mathbb{C}$ in the complex plane, for which uniform approximation by entire functions is possible. 

To this end, define the classes
\begin{align*}
H(F)=&\,\lbrace  f:U\to\mathbb{C} \text{ holomorphic for some open set $U\supset F$}\rbrace,\\
 A(F)=&\,\lbrace  f:F\to\mathbb{C} \text{ holomorphic in }F^\circ\text{ and continuous in F}\rbrace,
\end{align*}
where $F^\circ$ is the interior of $F$.
\begin{dfn}\label{def:hole}
Any bounded connected component $B$ of $\mathbb{C}\setminus F$ is called a hole of $F$.
\end{dfn}
Arakelian's theorem \cite{Ar} gives necessary and sufficient conditions such that every function $f\in A(F)$ can be uniformly approximated on $F$ by entire functions..
\begin{dfn}[Arakelian set]\label{def:Arak}
Let $F\subset \mathbb{C}$ be a closed set. We say that $F$ is an Arakelian set, if and only if the following points hold:\\[5pt]
1. $F$ is without holes;\\[5pt]
2. The union of all holes of $F\cup K$ is bounded, for any compact subset $K\subset \mathbb{C}$. 
\end{dfn}
\begin{rmk}\label{rem:K}
Point 2. in Definition \ref{def:Arak} is equivalent to the one where $K$ is replaced by $\overline{D}_0(n)=\{z\in\mathbb{C}: |z|\leq n\}$, for any $n\in\mathbb{N}$. 
\end{rmk}
\begin{rmk}
In fact, Definition \ref{def:Arak} appears in a later work \cite{RR}, where a new proof of Arakelian's theorem was given (see also \cite{Got}). The original conditions in \cite{Ar} are:\\[5pt]
1. $\mathbb{C}\setminus F$ is connected;\\[5pt]
2. $\mathbb{C}\setminus F$ is locally connected at infinity.\\[5pt]
Nevertheless, they are easily seen to be equivalent.
\end{rmk}

\begin{dfn}[Uniform approximation set]
Let $F\subset \mathbb{C}$ be a closed set. We say that $F$ is a set of uniform approximation, if for every $\epsilon>0$ and $f\in A(F)$, there exists an entire function $g$ such that $|f(z)-g(z)|<\epsilon$, for all $z\in F$. 
\end{dfn}
\begin{thm}[Arakelian \cite{Ar}]\label{thm:Ar}
A closed set $F\subset \mathbb{C}$ is a set of uniform approximation if and only if $F$ is an Arakelian set. 
\end{thm}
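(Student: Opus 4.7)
The plan is to prove the two directions separately; the necessity of the Arakelian conditions is elementary, while sufficiency constitutes the core of the theorem and will be the main obstacle.

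For necessity, assume $F$ is a set of uniform approximation. To establish Definition \ref{def:Arak}.1, suppose towards a contradiction that $B$ is a hole of $F$ and pick $a\in B$. The function $f(z)=1/(z-a)$ lies in $H(F)\subset A(F)$. If $g$ is entire with $|f-g|<\epsilon$ on $F$, then the entire function $h(z)=(z-a)g(z)-1$ satisfies $|h(z)|<\epsilon\,\mathrm{diam}(\overline B)$ on $\partial B\subset F$; the maximum principle on the compact set $\overline B$ propagates this bound to all of $\overline B$, but $h(a)=-1$, a contradiction for $\epsilon$ small. To establish Definition \ref{def:Arak}.2, I would argue that if it fails then by Remark \ref{rem:K} there is a radius $n$ and a sequence of distinct holes $B_k$ of $F\cup\overline{D}_0(n)$ with $\mathrm{dist}(B_k,0)\to\infty$, each of whose boundary necessarily meets $\overline{D}_0(n)$ (otherwise $B_k$ would be a hole of $F$ alone). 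On the arcs $\partial B_k\cap F$ one can then prescribe continuous values of $f\in A(F)$ whose modulus grows with $k$; applying the maximum principle to any entire approximant $g$ on each $\overline{B_k}$ forces $g$ to take incompatible values on the fixed compact set $\overline{D}_0(n)$, a contradiction.

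For sufficiency, I would use a Mergelyan-style exhaustion. Choose $r_n\uparrow\infty$ and set $F_n=F\cap\overline{D}_0(r_n)$. Definition \ref{def:Arak}.2 applied with $K=\overline{D}_0(r_n)$ bounds the holes of $F\cup\overline{D}_0(r_n)$, from which one deduces that each $F_n$ is a compact set whose complement has controlled bounded components (arising only from the interplay with the disc boundary), while Definition \ref{def:Arak}.1 ensures that no hole of $F$ survives into $F_n$ intrinsically. This is exactly the topological input needed so that Mergelyan's theorem produces a polynomial $h_n$ with $|f-h_n|$ small on $F_n$. Inductively, I would construct entire $g_n$ with $|f-g_n|<\epsilon(1-2^{-n})$ on $F_n$ and $|g_n-g_{n-1}|<\epsilon\,2^{-n}$ on $F_{n-1}$; the sequence is uniformly Cauchy on each $F_n$ and its limit $g$ is entire with $|f-g|\le\epsilon$ on $F$.

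The technical heart, and what I expect to be the main obstacle, is the inductive step: given $g_{n-1}$, produce $g_n$ that approximates $f$ well on the larger set $F_n$ without disturbing $g_{n-1}$ by more than $\epsilon\,2^{-n}$ on $F_{n-1}$. The tool is a fusion lemma of Roth/Gauthier type applied to the pair $(F_{n-1},\overline{F\setminus F_{n-1}^\circ})$: one writes the difference $h_n-g_{n-1}$ (which is holomorphic on a neighbourhood of $F_n$) as $\phi+\psi$ with $\phi$ entire and small on $F_{n-1}$ and $\psi$ small on $F_n\setminus F_{n-1}$, via a Cauchy-Green or Vitushkin localization construction. The decomposition is feasible precisely because Definition \ref{def:Arak} prevents pathological topology of the complementary channels between successive $F_n$. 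Telescoping $g_n=g_{n-1}+\phi_n$ then yields the desired entire limit.
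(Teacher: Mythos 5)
The paper does not prove Theorem~\ref{thm:Ar} at all; it is cited as a classical result (Arakelian \cite{Ar}, with later proofs in \cite{RR}, \cite{Gaier}). There is therefore no proof in the paper to compare against, and what you have written should be judged on its own merits. Your argument for the necessity of Definition~\ref{def:Arak}.1 (the function $f(z)=1/(z-a)$, the auxiliary $h(z)=(z-a)g(z)-1$, and the maximum principle on $\overline{B}$) is the standard one and is correct. The rest, however, has real gaps.

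For the necessity of Definition~\ref{def:Arak}.2 the sketch does not close. First, you cannot freely ``prescribe continuous values of $f\in A(F)$ whose modulus grows with $k$'' on the arcs $\partial B_k\cap F$: a function in $A(F)$ must be continuous on all of $F$ and holomorphic on $F^\circ$, and it is not obvious that one can arrange unbounded growth precisely along those arcs while remaining in $A(F)$. Second, and more seriously, the maximum-principle step points the wrong way. The boundary $\partial B_k$ decomposes into a portion in $F$ and a portion in $\partial \overline{D}_0(n)$; the maximum of $|g|$ over $\overline{B_k}$ is attained on $\partial B_k$, so making $g$ large on $\partial B_k\cap F$ does not force $g$ to be large (or take ``incompatible values'') on the fixed compact $\overline{D}_0(n)$ — it simply records that the boundary maximum happens to sit in $F$. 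No contradiction is produced. A correct proof of this implication needs a genuinely different mechanism (for instance, constructions as in \cite[Ch.~IV, \S2]{Gaier}, or a Weierstrass-product/argument-principle argument of the kind this paper uses for its Proposition~\ref{prop:FNP}). For sufficiency, you correctly name the fusion/localization strategy (Roth's fusion lemma, Vitushkin localization, Mergelyan on compacta, telescoping), which is indeed one of the known routes, but the paragraph stops exactly where the theorem is hard: establishing that the topological hypotheses make the fusion step quantitatively uniform across the exhaustion, and that $F_n=F\cap\overline{D}_0(r_n)$ (which can acquire new holes not present in $F$) can be handled. As written this is a plan for a proof, not a proof; the ``main obstacle'' you identify is left unresolved.
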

\subsection{Main result}

We can now state the main result:
\begin{thm}\label{thm:main}
A closed set $F\subset \mathbb{C}$ is an Arakelian set, if and only if 
for  every $f\in A(F)$ with no zeros in $F$, there exists $g\in A(F)$ such that $e^g=f$ in $F$.
\end{thm}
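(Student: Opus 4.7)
The plan is to handle the two directions of the equivalence separately.

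For the forward implication, given $F$ Arakelian and $f\in A(F)$ nowhere-zero, I will invoke Theorem~\ref{thm:Ar} to approximate $f$ uniformly on $F$ by an entire function $h$, arranging the error small relative to $|f|$ (localizing via an exhaustion of $F$ by compacts in case $|f|$ is not globally bounded below). Then $h$ has no zeros on $F$ and $f/h$ lies in a disk about $1$, so $\log(f/h)$ taken by its principal branch is in $A(F)$. The remaining task is to produce a logarithm of $h$ on $F$: because $F$ has no holes, every zero of $h$ lies in the unbounded component of $\mathbb{C}\setminus F$, and a standard planar-topology argument shows that every continuous closed curve $\gamma$ in $F$ has winding number $0$ about each zero of $h$. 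Consequently the exponential lifting obstruction for $h:F\to\mathbb{C}^*$ vanishes, a continuous branch of $\log h$ on $F$ exists and is automatically holomorphic in $F^\circ$, and $g=\log h+\log(f/h)$ is the desired element of $A(F)$.

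For the converse I proceed by contrapositive: assuming $F$ is not Arakelian, I exhibit a nowhere-zero $f\in A(F)$ admitting no $g\in A(F)$ with $e^g=f$. Two cases arise. In the first, $F$ has a hole $B$: I fix $z_0\in B$ and take $f(z)=z-z_0\in A(F)$, nowhere-zero on $F$. A putative $g\in A(F)$ with $e^g=f$ would be a continuous logarithm of $z-z_0$ on $F$; by standard planar topology, $z_0$ lying in a bounded component of $\mathbb{C}\setminus F$ yields a continuous closed curve $\gamma:[0,1]\to F$ with nonzero winding number about $z_0$, and lifting $t\mapsto \gamma(t)-z_0$ through $\exp$ via $g\circ\gamma$ forces $g(\gamma(1))\ne g(\gamma(0))$, contradicting $\gamma(0)=\gamma(1)$.

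In the second case, $F$ has no holes but condition~2 of Definition~\ref{def:Arak} fails; by Remark~\ref{rem:K} there exists $n$ such that the holes $\{B_j\}$ of $F\cup K$ with $K=\overline{D}_0(n)$ have unbounded union, so I may pick $z_j\in B_j$ with $|z_j|\to\infty$ (after a mild perturbation, $z_j\notin K$). By the Weierstrass factorization theorem there is an entire function $h$ with simple zeros precisely at $\{z_j\}$, so $h\in A(F)$ is nowhere-zero on $F$. Assuming $g\in A(F)$ with $e^g=h$, I will derive a contradiction by first extending $g$ to a continuous logarithm of $h$ on $F\cup K$: since $h$ is nonvanishing on the simply connected $K$ it admits a holomorphic logarithm $L$ on a neighborhood of $K$, and $L$ can be adjusted by a locally constant element of $2\pi i\mathbb{Z}$ to match $g$ on $F\cap K$ and glue to a continuous logarithm on $F\cup K$; then the first-case argument, applied to $F\cup K$ (which has the hole $B_j$) and the factor $z-z_j$ of $h$, produces the contradiction. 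The main obstacle is precisely this gluing and the subsequent reduction: matching the integer shifts coherently across the possibly many connected components of $F\cap K$, and peeling off the factors of $h$ to expose the single-factor obstruction, requires careful bookkeeping. Here the Weierstrass theorem plays an essential role, packaging the infinitely many winding obstructions coming from the unbounded holes $B_j$ into one nowhere-vanishing entire function on $F$.
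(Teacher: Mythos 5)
The proposal reproduces the high-level strategy of the paper (use Weierstrass in the converse, split the converse into a ``hole'' case and an ``unbounded union of holes'' case), but there are genuine gaps in every step where you try to replace the paper's lemmas with ``standard planar topology.''

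\textbf{Converse, Case~1.} You assert that if $z_0$ lies in a hole $B$ of $F$ then there is a continuous closed curve $\gamma$ \emph{in $F$} with nonzero winding number about $z_0$. This is false in general: $F$ need not be locally path-connected, and there may be no continuous loop in $F$ at all that separates $z_0$ from $\infty$. A concrete example is $F$ equal to a Warsaw-circle type set, which is connected and bounds a hole $B$ but is not path-connected, so no such $\gamma$ exists. The paper circumvents exactly this: it does \emph{not} stay inside $F$. Instead it uses Lemma~\ref{lem:Nestor} to extend the continuous logarithm $g$ from the compact set $\partial B\subset F$ to a genuine \emph{open} neighborhood $U$ of $\partial B$ on which it is holomorphic, and then Lemma~\ref{lem:Jordan} produces an analytic Jordan curve in $U$ (passing through points outside $F$) enclosing $z_0$; Lemma~\ref{lem:wind.0} then gives the contradiction. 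The two extension lemmas are the real content here, not mere technical polish.

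\textbf{Converse, Case~2.} Your plan is to extend $g$ to a continuous logarithm of $h$ on all of $F\cup K$ by adjusting a branch $L$ of $\log h$ on $K$ by locally constant multiples of $2\pi i$ to match $g$ on $F\cap K$. This breaks down: $L$ is defined on the connected set $K$, so it can be adjusted only by a \emph{single} global constant, whereas $L-g$ may take different values in $2\pi i\mathbb{Z}$ on different components of $F\cap K$, and there may be infinitely many such components accumulating. You flag this as the ``main obstacle,'' but the bookkeeping you hope for does not in general exist; this is not a loose end but the crux. The paper avoids it by \emph{not} attempting to glue on all of $F\cup K$: it extends $g$ (via Lemma~\ref{lem:Nestor}) only to a neighborhood of $F\cap\overline{D}_0(n_0+1)$, forms the hybrid function $h$ of \eqref{h}, and then runs a one-dimensional Lebesgue-measure argument \eqref{Gamma.i.Leb} on the arcs $\Gamma_i=(\partial B_i\cap S_{n_0})\setminus F$ to locate a \emph{single} hole $B_{i_0}$ whose entire boundary already lies in the domain of the glued logarithm. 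The reduction to one hole is achieved by measure theory, not by solving the global matching problem.

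\textbf{Forward direction.} Your reduction (approximate $f$ by entire $h$ with relative error $<1$, then split $g=\log h+\log(f/h)$) is a reasonable alternative to the paper's route, but the claim that vanishing of winding numbers of loops in $F$ about the zeros of $h$ implies the existence of a continuous logarithm of $h$ on $F$ is not ``standard'' for an arbitrary closed set: the usual lifting criterion requires $F$ to be locally path-connected and semi-locally simply connected, which a closed subset of $\mathbb{C}$ need not be. You also never invoke condition~2 of Definition~\ref{def:Arak}, yet the paper's proof of this direction (Proposition~\ref{prop:GP}) relies on Theorem~\ref{thm:F}, whose proof uses both conditions, to obtain a simply connected \emph{open} set $V$ with $F\subset V$; the lift is then taken on $V$, where covering-space theory applies without pathology. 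To make your forward argument rigorous you would still need to import Theorem~\ref{thm:F} (or reprove an equivalent statement), at which point the intermediate approximation by $h$ buys nothing.

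In short, the overall architecture matches the paper, but each place where you wave at ``standard planar topology'' is precisely where the paper deploys Lemmas~\ref{lem:Nestor}, \ref{lem:filling}, \ref{lem:Jordan} and the measure estimate \eqref{Gamma.i.Leb}, and those steps cannot be skipped.
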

At first glance, the existence of logarithmic branches of functions $f\in A(F)$ might seem to have little to do with the conditions in Definition \ref{def:Arak}. Intuitively, it is easier to make the connection by first stating yet another characterization of Arakelian sets, proved in \cite{F}, in terms of simply connected neighborhoods of $F$, that is to say, open sets $V\supset F$ whose connected components are simply connected.
\begin{thm}\label{thm:F}
A closed set $F\subset \mathbb{C}$ is an Arakelian set if and only if it possesses a neighborhood basis of simply connected open sets. 
\end{thm}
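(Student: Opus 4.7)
I plan to prove the two implications separately.

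\emph{Sufficiency} (neighborhood basis $\Rightarrow$ Arakelian). I verify each condition of Definition~\ref{def:Arak} by contradiction, using a well-chosen probe neighborhood $U$. For condition~(1), suppose $B$ is a hole of $F$ with $z_0\in B$, and take $U:=\mathbb{C}\setminus\{z_0\}$. By hypothesis there is an open $V$ with $F\subset V\subset U$ and simply connected components. On each component, $z-z_0$ is holomorphic and nonvanishing, hence admits a holomorphic logarithm; patching yields $L\in A(F)$ with $e^L=z-z_0$ on $F$. Restricting $L$ to $X:=F\cap\overline{D}_0(R)$ for $R$ large enough that $\overline{B}\subset\overline{D}_0(R)$, one has $\partial B\subset X$, and the component of $\mathbb{C}\setminus X$ containing $z_0$ equals $B$, which is bounded. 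But the Eilenberg criterion (equivalently, Alexander duality) asserts that a continuous logarithm of $z-z_0$ on the compact $X$ forces $z_0$ into the \emph{unbounded} component of $\mathbb{C}\setminus X$, a contradiction.

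For condition~(2), assume the union of holes of $F\cup\overline{D}_0(n_0)$ is unbounded, yielding distinct points $z_k$ in such holes with $|z_k|\to\infty$. Probe with $U:=\mathbb{C}\setminus\{z_k:k\in\mathbb{N}\}$; a simply-connected-component $V\subset U$ containing $F$ gives, for each $k$, a logarithm $L_k\in A(F)$ of $z-z_k$ by the same patching. To reach a contradiction I would first bootstrap the hypothesis to show that $F\cup\overline{D}_0(n_0)$ also admits a simply-connected-component neighborhood basis, and then apply the Eilenberg argument (as in condition~(1)) to the compact set $(F\cup\overline{D}_0(n_0))\cap\overline{D}_0(R)$ with $R$ large enough to enclose some $z_k$, contradicting the fact that $z_k$ lies in a bounded component of $\mathbb{C}\setminus(F\cup\overline{D}_0(n_0))$.

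\emph{Necessity} (Arakelian $\Rightarrow$ neighborhood basis). Given an arbitrary open $U\supset F$, I construct $V$ by exhaustion. Using condition~(2) and Remark~\ref{rem:K}, pick $R_n\nearrow\infty$ with $R_{n+1}$ large enough that every hole of $F\cup\overline{D}_0(R_n)$ is contained in $\overline{D}_0(R_{n+1})$. Set
\[
K_n:=F\cup\overline{D}_0(R_n)\cup\bigl(\text{union of holes of }F\cup\overline{D}_0(R_n)\bigr),
\]
which is closed with connected complement in $\mathbb{C}$. Inside $U$, choose thin open neighborhoods $W_n\subset U$ of the annular slices $K_n\cap\bigl(\overline{D}_0(R_{n+1})\setminus\mathrm{int}\,\overline{D}_0(R_{n-1})\bigr)$ whose components are simply connected, and glue the $W_n$ along overlap bands near $|z|=R_n$ to form $V:=\bigcup_n W_n$. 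The connected-complement property of each $K_n$ makes such $W_n$ available, and careful bookkeeping on the overlaps preserves the simple-connectedness of components globally.

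\emph{Expected main obstacle.} Condition~(2) of the backward direction is the most delicate: unlike condition~(1), which is witnessed by a single-point probe, it requires handling an escaping sequence of bad points and incorporating the disk $\overline{D}_0(n_0)$, which need not lie inside $V$. The bootstrap step---promoting the neighborhood-basis property from $F$ to $F\cup\overline{D}_0(n_0)$---is where I expect the main technical effort to sit.
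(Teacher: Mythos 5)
The paper does not actually prove Theorem \ref{thm:F}; it quotes it from \cite{F}, so the only internal point of comparison is the analogous contradiction argument in Proposition \ref{prop:FNP}. Your treatment of condition (1) of Definition \ref{def:Arak} is sound: probing with $U=\mathbb{C}\setminus\{z_0\}$, taking logarithms on the simply connected components of $V$, and applying the Borsuk--Eilenberg criterion to $X=F\cap\overline{D}_0(R)$ (after checking, as you implicitly do, that the component of $\mathbb{C}\setminus X$ containing $z_0$ is exactly $B$) is a legitimate and somewhat slicker substitute for the paper's route in Step 1 of Proposition \ref{prop:FNP}, which instead extends the logarithm to a neighborhood of $\partial B$ (Lemma \ref{lem:Nestor}) and applies the argument principle along a Jordan curve (Lemma \ref{lem:Jordan}). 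The genuine gap is condition (2). The ``bootstrap'' you propose --- that $F\cup\overline{D}_0(n_0)$ inherits a simply connected neighborhood basis from $F$ --- is false as a standalone implication: take $F=\{5+2e^{i\theta}:|\theta-\pi|\ge\delta\}$, a compact circular arc, which certainly has such a basis, while $F\cup\overline{D}_0(4)$ has a hole (the disc plugs the gap of the arc), and by your own condition-(1) argument a closed set with a hole admits no simply connected neighborhood basis. Worse, inside your contradiction argument the bootstrap's conclusion is flatly incompatible with the standing assumption that $F\cup\overline{D}_0(n_0)$ has holes $B_k$ at all, so ``proving the bootstrap'' under those hypotheses is exactly producing the contradiction you are after: the entire difficulty of the theorem has been deferred to an unproved statement which, as a lemma, is false. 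The concrete obstruction your sketch never touches is that $\partial B_k$ lies in $F\cup\overline{D}_0(n_0)$ but not in $F$: the logarithm of $z-z_k$ obtained on $V\supset F$ gives no control on the arcs of $\partial B_k$ lying on $\partial\overline{D}_0(n_0)\setminus F$, so neither the Eilenberg step nor an argument-principle step can be run around $z_k$. This is precisely what Step 2 of Proposition \ref{prop:FNP} is engineered to overcome: the traces $\Gamma_i=(\partial B_i\cap \partial\overline{D}_0(n_0))\setminus F$ are pairwise disjoint, so the length estimate \eqref{Gamma.i.Leb} yields some $\Gamma_{i_0}$ short enough to be absorbed into small discs centred on $F\cap\partial\overline{D}_0(n_0)$, where the logarithm does extend by Lemma \ref{lem:Nestor}. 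Some quantitative substitute of this kind is indispensable in your setting as well.

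Two further remarks. In the necessity direction, your assertion that $K_n=F\cup\overline{D}_0(R_n)\cup(\text{holes})$ is ``closed with connected complement in $\mathbb{C}$'' is incorrect (for $F=\mathbb{R}$ the complement of $F\cup\overline{D}_0(R_n)$ has two unbounded components); only the complement in the sphere is connected. More importantly, the gluing of the annular pieces $W_n$ is exactly where unions of sets with simply connected components can fail to have simply connected components (a chain of overlapping tubes can close a loop around a point outside $F$), and ``careful bookkeeping'' is not yet an argument; this direction is the content of \cite{F} (see also \cite{GP}) and requires a genuine construction, not just an exhaustion scheme.
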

At least one of the directions in Theorem \ref{thm:main} is clearer now. If a function $f\in A(F)$ could be extended to a neighborhood of $F$, then the existence of an intermediate simply connected neighborhood would allow for a logarithmic branch of $f$ to be well-defined in the latter.

We should give credit here to the influential work of Gauthier-Pouryayevali \cite{GP}, which inspired us to pursue the above characterizations. In fact, the directions in both Theorems \ref{thm:main}, \ref{thm:F} where we assume that $F$ is an Arakelian set, are contained in \cite{GP}.

Our contribution in the present paper is to prove the reverse direction stated in Theorem \ref{thm:main}. We argue by contradiction, assuming $F$ is not an Arakelian set. Then a special function $f\in A(F)$ is constructed, which is finally proven not to satisfy the initial assumption. Interestingly, the function $f$ that we use is entire and it is produced by invoking the Weierstrass factorization theorem, unlike the meromorphic functions that are usually employed in such proofs \cite[Chapter IV, \S2.$C_2$]{Gaier}.

\subsection{Outlook}

Theorem \ref{thm:Ar} has been successfully extended to relatively closed subsets of planar domains $G\subset \mathbb{C}$ \cite{Ar2} and non-planar Riemann surfaces of finite genus \cite{GH,Sch}. One of the outstanding open problems in complex approximation is to characterize sets of uniform approximation in Riemann surfaces of infinite genus, where Arakelian sets (Definition \ref{def:Arak}) are not always sets of uniform approximation, see \cite{BG} for a counterexample. Moreover, it was shown by Scheinberg \cite{Sch} that such a characterization, if it exists, cannot be purely topological, but it has to take into account the complex analytic structure of the given Riemann surface. 

It would be interesting to seek potential extensions of Theorem \ref{thm:main} to Arakelian sets of planar domains $G\subset \mathbb{C}$, as in \cite{Ar2}. The present logarithmic characterization and our method of proof could be naturally extended to Arakelian sets of simply connected domains $G\subset\mathbb{C}$, see \cite[Corollary 2.15]{F} for one of the directions. However, we do not know how it could be generalized to planar domains $G$ with holes, even when $G$ is an annulus. Finally, coming up with a generalization of our characterization that would take into account the analytic structure of a given domain, could hopefully be of use for characterizing uniform approximation sets in Riemann surfaces.

\subsection{Acknowledgments}

We would like to thank Paul M. Gauthier for his interest in this work and insightful comments.
G.F. and S.P. would like to acknowledge the support of the ERC starting grant 101078061 SINGinGR, under the European Union's Horizon Europe program for research and innovation, and the H.F.R.I. grant 7126, under the 3rd call for H.F.R.I. research projects to support post-doctoral researchers.

\section{Basic lemmas}

{\it Notation:} In the rest of the paper, $D_z(\delta)=\{w\in \mathbb{C}: |w-z|<\delta\}$ and $\overline{D}_z(\delta)=\{w\in \mathbb{C}: |w-z|\leq\delta\}$.

The following lemma is a simple generalization of the fact that continuous logarithmic branches in open sets are holomorphic.
\begin{lem}\label{lem:Car}
Let $V\subset\mathbb{C}$ be an open set and $f:V\to\mathbb{C}$ a holomorphic function. If there exists a continuous function $g:V\to\mathbb{C}$, such that $e^g=f$ in $V$, then $g$ is holomorphic in $V$.
\end{lem}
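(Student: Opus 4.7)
The plan is to reduce the global claim to a purely local one: on every small disk contained in $V$, I will compare $g$ to a genuinely holomorphic logarithmic branch of $f$, and show that the two differ by a locally constant integer multiple of $2\pi i$, forcing $g$ itself to be holomorphic there.

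First I would observe that the identity $e^g = f$ immediately implies that $f$ has no zeros in $V$. Fix an arbitrary point $z_0 \in V$ and choose a disk $D = D_{z_0}(\delta) \subset V$. Since $f$ is holomorphic and nonvanishing on the simply connected open set $D$, a standard argument (integrate $f'/f$ along radial paths from $z_0$ and exponentiate to recover $f$ up to a constant, which can be absorbed) produces a holomorphic function $h : D \to \mathbb{C}$ with $e^h = f$ on $D$.

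Next, on $D$ both $g$ and $h$ are logarithms of the same function, so $e^{g-h} \equiv 1$, which means $(g-h)(z) \in 2\pi i \mathbb{Z}$ for every $z \in D$. The function $g - h$ is continuous on the connected set $D$ and takes values in the discrete set $2\pi i \mathbb{Z}$, hence it is constant: there exists $k \in \mathbb{Z}$ such that $g(z) = h(z) + 2\pi i k$ for all $z \in D$. Consequently, $g$ agrees with the holomorphic function $h + 2\pi i k$ on $D$ and is therefore holomorphic in a neighborhood of $z_0$.

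Since $z_0 \in V$ was arbitrary and holomorphy is a local property, $g$ is holomorphic on all of $V$. There is no real obstacle here; the only step one might worry about is the local existence of the holomorphic branch $h$, but that is a textbook consequence of simple connectedness of the disk combined with $f$ being nonvanishing, so the proof should be short and self-contained.
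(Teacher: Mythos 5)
Your proof is correct and follows essentially the same strategy as the paper's: construct a local holomorphic logarithm of $f$ near an arbitrary point, observe that $g$ differs from it by a continuous $2\pi i\mathbb{Z}$-valued function which must be constant, and conclude local (hence global) holomorphy of $g$. The only minor difference is in how the local branch is produced — the paper shrinks the disk so that $f$ maps it into a disk $D_{f(w)}(\varepsilon)$ on which a branch of $\log$ exists and then composes, while you integrate $f'/f$ over the disk; both are standard and equally valid.
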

\begin{proof}
Let $w\in V$ and let $\varepsilon>0$ such that $|f(w)|>\varepsilon$. Such an $\varepsilon>0$ exists, since $f(w)=e^{g(w)}\neq0$. We can then define a holomorphic branch of the logarithm $\log: D_{f(w)}(\varepsilon)\to\mathbb{C}$. By the openness of $V$ and the continuity of $f$, there exists $D_w(\delta)\subset V$, such that $f\big(D_w(\delta)\big)\subset D_{f(w)}(\varepsilon)$. Define $g_w: D_{w}(\delta)\to\mathbb{C}$, $g_w(z)=(\log\circ f)(z)$. By construction, $g_w$ is holomorphic in $D_{w}(\delta)$, as the composition of two holomorphic functions, and $e^{g_w(z)}=f(z)$, for all $z\in D_w(\delta)$. 
It follows that the ratio 
\begin{align*}
\frac{g(z)-g_w(z)}{2\pi i}\in\mathbb{Z},
\end{align*}
for all $z\in D_w(\delta)$. Since $g(z)$ is continuous and the previous ratio only takes integer values, it must be constant.
Hence, $g(z)$ is holomorphic in $D_w(\delta)$. Also, since $w\in V$ is arbitrary, we have that $g$ is holomorphic in $V$.
\end{proof}

The following lemma generalizes the previous one to compact sets, in the sense that if a logarithm branch is continuous in a compact set $K$, then it must belong to $H(K)$, while still serving as a logarithm branch of the same function.
\begin{lem}\label{lem:Nestor}
Let $K\subset\mathbb{C}$ be a compact set and $f\in H(K)$ with no zeros in $K$. Assume that there exists a continuous function $g:K\rightarrow\mathbb{C}$ such that $e^{g}=f$ in $K$. Then $g$ can be continuously extended in some open set $V\supset K$, such that $e^g=f$ in $V$ and $g$ is holomorphic in $V$. 
\end{lem}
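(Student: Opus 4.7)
The plan is to build a holomorphic logarithm of $f$ on a small open disk around each point of $K$, ensure that these local logarithms agree with $g$ on $K$, and then glue them into a single holomorphic function on an open neighborhood $V\supset K$. Since $f\in H(K)$, $f$ is holomorphic on some open set $U\supset K$, and as $f$ is nowhere zero on the compact set $K$, continuity of $f$ lets me shrink $U$ to an open neighborhood of $K$ on which $f$ has no zeros at all. For each $w\in K$, set $\epsilon_w=|f(w)|/2>0$, so that $0\notin D_{f(w)}(\epsilon_w)$, and let $\log_w:D_{f(w)}(\epsilon_w)\to\mathbb{C}$ be the holomorphic branch of the logarithm normalized by $\log_w(f(w))=g(w)$, which is available since $e^{g(w)}=f(w)$. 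Choose $\delta_w>0$ small enough that $D_w(\delta_w)\subset U$ and $f(D_w(\delta_w))\subset D_{f(w)}(\epsilon_w)$; then the composition $g_w:=\log_w\circ f$ is holomorphic on $D_w(\delta_w)$, satisfies $e^{g_w}=f$ there, and $g_w(w)=g(w)$. By continuity of $g_w$ at $w$, after shrinking $\delta_w$ further I may assume that $|g_w(z)-g(w)|<\pi/3$ on the whole disk $D_w(\delta_w)$.

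Next I would verify that $g_w=g$ on $D_w(\delta_w)\cap K$. On this intersection one has $e^{g_w}=f=e^{g}$, so $(g_w-g)/(2\pi i)$ is a continuous function with integer values. By continuity of $g$ on $K$ at $w$, I may further shrink $\delta_w$ so that $|g(z)-g(w)|<\pi/3$ for every $z\in D_w(\delta_w)\cap K$. The triangle inequality then gives $|g_w(z)-g(z)|<2\pi/3<2\pi$, which forces the integer to be $0$, so $g_w\equiv g$ on $D_w(\delta_w)\cap K$.

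The main obstacle is to patch the family $\{g_w\}_{w\in K}$ into a single well-defined function on $V:=\bigcup_{w\in K}D_w(\delta_w)$. On any nonempty overlap $D_{w_1}(\delta_{w_1})\cap D_{w_2}(\delta_{w_2})$, which is convex and hence connected, both $g_{w_1}$ and $g_{w_2}$ are holomorphic logarithms of $f$, so they differ by some constant $2\pi i n$ with $n\in\mathbb{Z}$. If the overlap meets $K$, the previous paragraph immediately forces $n=0$; however, in general the overlap can lie entirely inside $\mathbb{C}\setminus K$, and this is precisely the delicate point. To handle it I would appeal to the uniform continuity of $g$ on the compact set $K$: fix $\rho>0$ such that $|g(w_1)-g(w_2)|<\pi/3$ whenever $w_1,w_2\in K$ and $|w_1-w_2|<\rho$, and then require from the outset that every $\delta_w\leq \rho/2$. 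Any pair of overlapping disks then satisfies $|w_1-w_2|<\rho$, and the triangle inequality
\[
|g_{w_1}(z)-g_{w_2}(z)|\leq |g_{w_1}(z)-g(w_1)|+|g(w_1)-g(w_2)|+|g(w_2)-g_{w_2}(z)|<\pi,
\]
for $z$ in the overlap, again forces $n=0$. The $g_w$'s therefore glue to a single holomorphic function $\tilde g$ on $V$ with $e^{\tilde g}=f$ and $\tilde g|_K=g$, which is the required extension.
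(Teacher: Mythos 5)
Your proof is correct and takes essentially the same approach as the paper's: cover $K$ by small disks on which local logarithm branches of $f$ exist, use uniform continuity of the continuous data on the compact set $K$ to control the radii, and apply a triangle-inequality estimate to show that the discrepancy on any overlap lies strictly inside a gap of size $2\pi$, hence must vanish. Your version is slightly streamlined in that you work directly with the complex-valued local branches $g_w=\log_w\circ f$ (normalized to match $g$ at the center) rather than the paper's decomposition $g=\log|f|+i\phi$ into modulus and argument, so the local pieces are holomorphic by construction and there is no need to invoke Lemma~\ref{lem:Car} at the end.
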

\begin{proof}
By assumption we have that $e^{g(z)}=f(z)$, for all $z\in K$. It follows that there exists a continuous choice of angles $\phi(z)$ that defines a function $\phi:K\rightarrow \mathbb{R}$ such that $g(z)=\log |f(z)|+ i\phi(z)$. Here, of course, $\phi(z)\in arg(f(z))$  is pieced together from various branches of the argument function, forming a continuous function. Since $\phi$ is uniformly continuous in $K$, there exists $\eta>0$ such that for all $z_1,z_2\in K$ with $|z_1-z_2|<\eta$ we have $|\phi(z_1)- \phi(z_2)|<\frac{1}{10^3}$. 

Now for every $z\in K$, there exists $\delta_z\in(0,\frac{\eta}{2})$, such that $\phi(w)$ can be extended to $D_z(\delta_z)$, relative to the branch of $\phi(z)$, where each $D_z(\delta_z)$ is contained in the domain of holomorphy of $f$. Moreover, by shrinking each $\delta_z$ if necessary, we may assume that for every $z_1,z_2\in D_z(\delta_z)$ it holds $|\phi(z_1)-\phi(z_2)|<\frac{1}{10^3}$. 

{\it Claim:} Let $V=\bigcup_{z\in K}D_z(\delta_z)$. The collection of extensions of $\phi(z)$ in each $D_z(\delta_z)$ gives rise to a well-defined and continuous choice of angles of $f(z)$ in $V$. 

To show that the claim holds, we must show that $\phi$ is indeed single valued. To this end, suppose that we have a $z_0\in D_{z_1}(\delta_{z_1})\cap D_{z_2}(\delta_{z_2})$ for some $z_1$, $z_2\in K$. In order for $\phi(z_0)$ to be well-defined, 
we need to show that $\phi_1(z_0)=\phi_2(z_0)$, where $\phi_1$ and $\phi_2$ are the corresponding branches of the argument function associated with $\phi(z_1)$ and $\phi(z_2)$ respectively. Since $\phi_1(z_0)$ and $\phi_2(z_0)$ represent the same angle but possibly in a different branch, to show equality it suffices to show that $|\phi_1(z_0)-\phi_2(z_0)|<2\pi$. By the triangle inequality we have
\begin{align*}
|\phi_1(z_0)-\phi_2(z_0)|&=|\phi_1(z_0)-\phi(z_1)+\phi(z_1)-\phi(z_2)+\phi(z_2)-\phi_2(z_0)|\\
&<|\phi_1(z_0)-\phi(z_1)|+|\phi(z_1)-\phi(z_2)|+|\phi(z_2)-\phi_2(z_0)|\\
\tag{$|z_1-z_2|<\delta_{z_1}+\delta_{z_2}<\eta$}&<\frac{3}{10^3}<2\pi.
\end{align*}
This completes the proof of the claim.

By shrinking each $\delta_z>0$ if necessary, $z\in K$, we can also continuously extend the uniformly continuous function $\log |f(z)|$ in $V$, such that $g=\log|f(z)|+i\phi(z)$ is itself continuously extended in $V$ and well-defined. The conclusion follows by Lemma \ref{lem:Car}, since $e^g=f$ in $V$ by construction.
\end{proof}

The following lemma is standard. 
\begin{lem}\label{lem:wind.0}
Let $V$ be an open set and $f:V\rightarrow\mathbb{C}$ a non-vanishing holomorphic function. The following statements are equivalent:\\[5pt]
1. There exists a holomorphic function $g:V\rightarrow\mathbb{C}$ such that $e^{g}=f$ in $V$.\\[5pt]
2. For every closed rectifiable path $\gamma$ in $V$, $\int_\gamma\frac{f'(z)}{f(z)}dz=0$.
\end{lem}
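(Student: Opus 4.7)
The plan is to prove the two implications separately. The direction $1 \Rightarrow 2$ is immediate: differentiating $e^{g}=f$ gives $g'e^{g}=f'$, hence $g'=f'/f$ on $V$. Thus $f'/f$ admits the holomorphic primitive $g$ on $V$, and by the fundamental theorem of calculus for complex line integrals, $\int_\gamma f'(z)/f(z)\,dz = g(\gamma(1))-g(\gamma(0))=0$ for any closed rectifiable path $\gamma$.

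For the reverse direction $2\Rightarrow 1$, I would first reduce to the case where $V$ is connected: if a holomorphic logarithm exists on each connected component of $V$, these logarithms glue to a holomorphic logarithm on all of $V$, since the components are pairwise disjoint open sets. Assuming $V$ connected, fix a base point $z_0\in V$. For every $z\in V$, choose a rectifiable path $\gamma_z$ in $V$ from $z_0$ to $z$ (e.g.\ polygonal, since open connected subsets of $\mathbb{C}$ are path-connected by polygonal paths), and define
\begin{equation*}
h(z):=\int_{\gamma_z}\frac{f'(w)}{f(w)}\,dw.
\end{equation*}
Hypothesis 2 guarantees that $h(z)$ is independent of the chosen path, since any two paths from $z_0$ to $z$ differ by a closed loop, along which $f'/f$ integrates to zero. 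A standard argument, using integration along straight segments inside a small disk $D_z(\delta)\subset V$ and continuity of $f'/f$, shows that $h$ is holomorphic on $V$ with $h'=f'/f$.

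To finish, consider $F(z):=f(z)e^{-h(z)}$. A direct computation yields
\begin{equation*}
F'(z)=e^{-h(z)}\bigl(f'(z)-f(z)h'(z)\bigr)=0\quad\text{on }V,
\end{equation*}
so $F$ is constant, equal to some $c\in\mathbb{C}\setminus\{0\}$ since $f$ has no zeros in $V$. Choosing any $c_0\in\mathbb{C}$ with $e^{c_0}=c$, the function $g:=h+c_0$ is holomorphic on $V$ and satisfies $e^{g}=f$ on $V$, as required.

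The only even mildly subtle point is the well-definedness and holomorphicity of $h$, both of which are routine consequences of hypothesis 2 and elementary complex analysis. Accordingly I do not anticipate any genuine obstacle; the whole argument is a textbook exercise, recorded here because it will be used as a building block in the main result.
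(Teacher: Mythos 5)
Your proof is correct, and the argument you give is the standard textbook derivation. Note that the paper itself offers no proof of this lemma, merely labelling it ``standard,'' so there is no authorial argument to compare against; your write-up simply supplies the expected routine details (primitive of $f'/f$ via path integration from a base point on each component, followed by the observation that $fe^{-h}$ is locally constant and nonzero, allowing the additive adjustment by a constant branch of $\log c$). One minor stylistic remark: the reduction to connected components is harmless but not strictly necessary, since the base-point construction and the constancy of $F=fe^{-h}$ can be carried out component by component in any case; stating it explicitly, as you do, avoids the common slip of concluding ``$F$ is constant'' on a disconnected $V$.
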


We would like the outer boundary of a hole $B$ of a closed set $F$ to serve as a regular curve enclosing some special point $\zeta\in B\subset F^c$. Although the former boundary may be quite complicated, in the subsequent lemmas we show that finding such a curve is possible in the vicinity of $\partial B$.
\begin{dfn}\label{def:filling}
Let $U\subset\mathbb{C}$ be a bounded domain and let $B_i$, $i\in I$, denote the bounded connected components of its complement $U^c$. We call $V=U\cup\big(\cup_{i\in I}B_i\big)$ its filling and $\partial V$ the outer boundary of $U$.
\end{dfn}
\begin{lem}\label{lem:filling}
Let $U,V$ be as in Definition \ref{def:filling}. Then $V$ is a simply connected bounded domain and $\partial V\subset \partial U$. 
\end{lem}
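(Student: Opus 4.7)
The plan is to recast $V$ as the complement of a single set: letting $W$ denote the unique unbounded connected component of the closed set $U^c$, one has $V=\mathbb{C}\setminus W$, since the $B_i$ together with $W$ exhaust the components of $U^c$. From this identification, $V$ is open because $W$, as a connected component of a closed set, is itself closed. It is bounded because any disk $D_0(R)\supset U$ forces the connected unbounded set $\mathbb{C}\setminus\overline{D}_0(R)\subset U^c$ to lie in $W$, hence $V\subset \overline{D}_0(R)$.

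For connectedness of $V$, I would show that $\partial B_i\subset \overline{U}$ for every $i$, which glues each $B_i$ to $U$ through a shared limit point. The key local observation is that if a connected neighborhood $N$ lies in $U^c$ and meets $B_i$, then $N\subset B_i$, since $B_i$ is a whole connected component of $U^c$. Hence if some $z\in \partial B_i$ admitted a neighborhood disjoint from $U$, it would be trapped inside $B_i$, making $z$ interior to $B_i$ and yielding a contradiction. Thus $\partial B_i\subset \overline{U}$, and since $B_i$ is a proper non-empty subset of $\mathbb{C}$, $\partial B_i$ is non-empty; so $U\cup B_i$ is connected for each $i$, and consequently so is $V$.

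Simple connectedness then follows from the standard planar criterion that a bounded domain in $\mathbb{C}$ is simply connected if and only if its complement in $\mathbb{C}$ is connected: here $\mathbb{C}\setminus V=W$ is connected by the definition of $W$. For the inclusion $\partial V\subset \partial U$, I would recycle the same localization argument. Any $z\in \partial V$ lies in $\overline{\mathbb{C}\setminus V}=\overline{W}=W\subset U^c$, so $z\notin U$; and if some neighborhood $N$ of $z$ missed $U$, the connected set $N\subset U^c$ would be trapped in $W$ (since it meets $W$ at $z$), forcing $N\cap V=\emptyset$ and contradicting $z\in \overline{V}$. Hence every neighborhood of $z$ meets $U$, i.e. $z\in \overline{U}\setminus U=\partial U$.

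The whole argument is largely topological bookkeeping; the only substantive point, reused twice, is the trapping principle that a connected subset of $U^c$ meeting a given component of $U^c$ is entirely contained in that component. I therefore do not anticipate a genuine obstacle beyond keeping careful track of which component is which, and of the distinction between $\overline{V}$, $V^c$, and $\partial V$ in the final inclusion.
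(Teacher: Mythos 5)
Your proof is correct, and at its core it relies on the same ``trapping'' observation as the paper: a connected set inside $U^c$ that touches a given component of $U^c$ must lie entirely in that component, so a boundary point of a $B_i$ (or of $V$) cannot have a neighborhood avoiding $U$. Where you diverge is in how connectedness of $V$ is organized. The paper argues by contradiction, supposing $V=G_1\sqcup G_2$ with $U\subset G_2$, deducing that some $B_{i_0}\subset G_1$, then placing a closed disk around a point of $\partial B_{i_0}$ inside $G_1\subset U^c$ and contradicting maximality of $B_{i_0}$. You instead prove directly that $\partial B_i\subset\overline U$ for each $i$, so each $U\cup B_i$ is connected (shared limit point), and hence so is the union $V$. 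The direct version is a bit cleaner and avoids the disconnection bookkeeping; the paper's version has the small advantage of not needing the observation that $\partial B_i\subset B_i$ (used implicitly in your step, and valid since components of the closed set $U^c$ are closed). For simple connectedness you use the planar criterion on $\mathbb{C}$, the paper passes to the Riemann sphere; these are equivalent for bounded domains. Everything else, including the proof of $\partial V\subset\partial U$, matches.
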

\begin{proof}
Let $B_u$ denote the unbounded connected component of $U^c$. Note that $\mathbb{C}=U\cup\big(\cup_{i\in I}B_i\big)\cup B_u$ and $V=B_u^c$. 
Since $B_u$ is closed, $V$ must be open. 

If $G_1,G_2$ are two disjoint open sets whose union $G_1\cup G_2=V$, then $(G_1\cap U)\cup(G_2\cap U)=V\cap U=U$. Since $U$ is connected, either $G_1\cap U$ or $G_2\cap U$ is the empty set, say $G_1\cap U=\emptyset$. Hence,  $G_1$ must intersect some $B_{i_0}$. Since $B_{i_0}$ is connected and $(G_1\cap B_{i_0})\cup(G_2\cap B_{i_0})=V\cap B_{i_0}=B_{i_0}$, we infer that $G_2\cap B_{i_0}=\emptyset$ and hence, $B_{i_0}\subset G_1$. Let $z\in\partial B_{i_0}$ and let $\overline{D}_z(\delta)\subset G_1$. We observe that the union $B_{i_0}\cup \overline{D}_z(\delta)$ is connected, closed, and it strictly contains $B_{i_0}$. Hence, it cannot be solely contained in $U^c$. In other words, $\overline{D}_z(\delta)\cap U\neq\emptyset$, which is a contradiction since $G_1\cap U=\emptyset$. We conclude that $G_1,G_2$ as assumed above do not exist and therefore, $V$ is connected (ie. a domain).

Let $\mathbb{C}\cup\{\infty\}$ denote the Riemann sphere. Then $(\mathbb{C}\cup\{\infty\})\setminus V=B_u\cup\{\infty\}$, which is obviously connected since $B_u$ is connected and unbounded. Thus, $V$ is simply connected. 

Let $z\in\partial V$. Then for every $\delta>0$, $D_z(\delta)$ intersects $V$ and $V^c=B_u\subset U^c$. If $D_z(\delta)$ does not intersect $U$, then $B_u\cup \overline{D}_z(\frac{\delta}{2})\subset U^c$ is closed, connected, and it strictly contains $B_u$, since $D_z(\frac{\delta}{2})\cap V\neq \emptyset$. The latter cannot be, hence, $D_z(\delta)\cap U\neq \emptyset$ and $z\in \partial U$. Hence, $\partial V\subset \partial U$. This completes the proof of the lemma. 
\end{proof}
We say that a Jordan curve $\gamma$ encloses $\zeta$, if $\zeta$ is contained in the bounded connected component of $\mathbb{C}\setminus\{\gamma\}$. 
\begin{lem}\label{lem:Jordan}
Let $U,V$ be as in Definition \ref{def:filling} and let $\zeta\in U$. There exists an analytic Jordan curve $\gamma\subset V$, arbitrarily close to $\partial V$, such that $\gamma$ encloses $\zeta$.  
\end{lem}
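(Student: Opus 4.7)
My plan is to construct the curve $\gamma$ as the image of a circle under a Riemann map, and to push that circle close enough to $\partial \mathbb{D}$ so that its image falls near $\partial V$.

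First, I would invoke Lemma \ref{lem:filling} to know that $V$ is a simply connected bounded domain, hence a proper simply connected subdomain of $\mathbb{C}$. The Riemann Mapping Theorem then furnishes a conformal bijection $\Phi:\mathbb{D}\to V$, and since $\zeta\in U\subset V$ I would normalize so that $\Phi(0)=\zeta$. For any $r\in(0,1)$, set $\gamma_r:=\Phi(\{|w|=r\})$. Because $\Phi$ is injective and conformal in a neighborhood of $\overline{D}_0(r)\subset\mathbb{D}$, the set $\gamma_r$ is an analytic Jordan curve lying in $V$. Moreover, $\Phi$ is a homeomorphism onto $V$, so the bounded component of $\mathbb{C}\setminus\gamma_r$ coincides with $\Phi(\{|w|<r\})$, which contains $\Phi(0)=\zeta$. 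Hence each $\gamma_r$ is an analytic Jordan curve in $V$ that encloses $\zeta$.

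It remains to show that $\gamma_r$ is arbitrarily close to $\partial V$ for $r$ near $1$. Given $\varepsilon>0$, I would consider $V_\varepsilon:=\{z\in V:\mathrm{dist}(z,\partial V)\geq\varepsilon\}$. This set is bounded (since $V$ is bounded) and closed in $\mathbb{C}$ (any limit point has distance $\geq\varepsilon>0$ from $\partial V$, so it cannot lie on $\partial V$ and therefore must belong to $V$), so $V_\varepsilon$ is compact. By continuity of $\Phi^{-1}$, the preimage $\Phi^{-1}(V_\varepsilon)$ is a compact subset of $\mathbb{D}$, hence contained in some disk $\{|w|\leq r_0\}$ with $r_0<1$. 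For any $r\in(r_0,1)$, the curve $\gamma_r$ lies in $V\setminus V_\varepsilon$, i.e.\ every point of $\gamma_r$ is within $\varepsilon$ of $\partial V$.

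The proof is essentially immediate modulo the Riemann Mapping Theorem, and I do not anticipate a serious obstacle. The one point that requires a moment of care is the interpretation of ``arbitrarily close to $\partial V$''; I would read it in the one-sided sense above, which is all that is needed for subsequent arguments and which follows purely from the homeomorphism property of $\Phi$, thereby avoiding any appeal to Carath\'eodory's boundary extension theorem (which in any case may fail for the potentially irregular boundary of $V$).
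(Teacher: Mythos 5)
Your proof is correct and is essentially the same argument as the paper's: both pass to the filling $V$, apply the Riemann Mapping Theorem, take the image of a circle $\{|w|=r\}$ for $r$ close to $1$, and use compactness of $\{z\in V:\mathrm{dist}(z,\partial V)\geq\varepsilon\}$ to push the curve into $\{z\in V:\mathrm{dist}(z,\partial V)<\varepsilon\}$. The only cosmetic difference is that you normalize $\Phi(0)=\zeta$ to make the enclosing immediate, whereas the paper instead chooses $\varepsilon<\mathrm{dist}(\zeta,\partial V)$ so that $\zeta\in K_\varepsilon\subset h^{-1}(D_0(r))$.
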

\begin{proof}
Let $\delta=\text{dist}(\zeta,\partial V)>0$. 
Since $V$ is a simply connected and bounded domain, by the Riemann mapping theorem, there exists a biholomorphic function $h:V\to D_0(1)$. Let $0<\varepsilon<\delta$ and let $K_\varepsilon=\{z\in V: \text{dist}(z,\partial V)\ge \varepsilon\}$. Since $K_\varepsilon$ is compact, so is $h(K_\varepsilon)\subset D_0(1)$. Hence, there exists an $r\in(0,1)$, such that $h(K_\varepsilon)\subset D_0(r)$. Now let $\Gamma=\{(r\cos \theta, r\sin\theta): \theta\in[0,2\pi]\}$ and let $\gamma(\theta)=h^{-1}(\Gamma)$. We notice that $\gamma$ is an analytic Jordan curve contained in $V_\varepsilon=\{z\in V: \text{dist}(z,\partial V)<\varepsilon\}$. Moreover, $\mathbb{C}\setminus \{\gamma\}$ divides the plane in a bounded and an unbounded component. The bounded one is $B=h^{-1}\big(D_0(r)\big)\supset K_\varepsilon$, since it is open, connected, and its boundary coincides with $\gamma$. By construction, $\zeta\in K_\varepsilon\subset B$. Hence, $\gamma$ encloses $\zeta$.  
\end{proof}
\section{Proof of Theorem \ref{thm:main}}\label{sec:proof.thm}

The one direction of the characterization in Theorem \ref{thm:main} is contained in \cite{GP}. We include the proof here for the sake of completeness.
\begin{prp}[Gauthier-Pouryayevali \cite{GP}]\label{prop:GP}
Suppose that the closed set $F\subset \mathbb{C}$ is an Arakelian set. For every function $f\in A(F)$ with no zeros in $F$, there exists a function $g\in A(F)$ such that $e^g=f$ in $F$.
\end{prp}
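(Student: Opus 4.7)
The plan is to exploit Theorem \ref{thm:F} --- the simply connected neighborhood basis characterization of Arakelian sets --- in order to reduce the problem to a purely topological question, and then to upgrade the resulting continuous logarithm to a holomorphic one on $F^\circ$ via Lemma \ref{lem:Car}. The key conceptual point is that $f$ itself need not be extended holomorphically off $F$; a continuous extension is enough, and that is always available.

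First, I would extend $f$ to a continuous function $\tilde f:\mathbb{C}\to\mathbb{C}$ by Tietze's extension theorem (applied to $\operatorname{Re} f$ and $\operatorname{Im} f$ separately, using that $F$ is closed in $\mathbb{C}$). The set $U=\{z\in\mathbb{C}:\tilde f(z)\neq 0\}$ is then open, and since $\tilde f|_F=f$ has no zeros in $F$ by hypothesis, $U$ is an open neighborhood of $F$. Theorem \ref{thm:F} now provides a simply connected open set $V$ with $F\subset V\subset U$, inside of which $\tilde f$ never vanishes.

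On $V$, $\tilde f$ is a continuous map into $\mathbb{C}\setminus\{0\}$. Since $V$ is simply connected (and open in $\mathbb{C}$, hence locally path-connected) and $\exp:\mathbb{C}\to\mathbb{C}\setminus\{0\}$ is a covering map, standard covering-space theory supplies a continuous lift $G:V\to\mathbb{C}$ with $e^{G}=\tilde f$ on $V$. Setting $g:=G|_F$, I immediately obtain a continuous $g:F\to\mathbb{C}$ with $e^g=f$ in $F$.

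It only remains to check that $g$ is holomorphic in $F^\circ$. On this open subset of $V$, the function $f=\tilde f$ is holomorphic by the very definition of $A(F)$, and $g$ is a continuous function satisfying $e^g=f$. Applying Lemma \ref{lem:Car} on each connected component of $F^\circ$ yields holomorphy of $g$ there, and hence $g\in A(F)$. The only substantive ingredient is the invocation of Theorem \ref{thm:F}; Tietze extension, the covering-space lift, and the upgrade from continuous to holomorphic logarithm are essentially automatic, so I do not anticipate a serious obstacle in executing this plan.
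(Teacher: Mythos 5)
Your proposal is correct and follows essentially the same route as the paper's own proof: Tietze extension, Theorem \ref{thm:F} to produce a simply connected neighborhood $V$ inside the nonvanishing locus, a covering-space lift of $\tilde f|_V$ along $\exp$, and Lemma \ref{lem:Car} to upgrade continuity to holomorphy on $F^\circ$. The only (immaterial) differences are cosmetic, e.g.\ your remark about applying Lemma \ref{lem:Car} componentwise on $F^\circ$, which is not needed since the lemma already applies to arbitrary open sets.
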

\begin{proof}
By Tietze's extension theorem, there exists a continuous extension of $f$ in $\mathbb{C}$, which we
denote by $\widetilde{f}:\mathbb{C}\to\C$. Our assumption implies that the open set $U=\mathbb{C}\setminus \widetilde{f}^{-1}(\{0\})$ contains $F$.
Thus, by Theorem \ref{thm:F} there exists a simply connected open set $V$ with $F\subset V\subset U$.
If we consider the covering map $\exp: \mathbb{C}\to\mathbb{C}\setminus\{0\}$, then the latter implies that
$\widetilde{f}_{\big|V}:V\to\mathbb{C}\setminus\{0\}$ can be lifted
to a continuous function $\widetilde{g}:V\to\mathbb{C}$, such that $\widetilde{f}_{\big|V}=e^{\widetilde{g}}$.
The function $g=\widetilde{g}_{\big|F}$ is obviously continuous and $e^g=f$ in $F$.
Since $f_{\big|F^0}$ is holomorphic, by Lemma \ref{lem:Car}, $g$ is also holomorphic in $F^0$.
Thus, $g\in A(F)$ and the proof of the proposition is complete.
\end{proof}
The reverse direction is contained in the following proposition that, combined with Proposition \ref{prop:GP}, completes the proof of Theorem \ref{thm:main}. 
\begin{prp}\label{prop:FNP}
Let $F\subset\mathbb{C}$ be a closed set, such that for every $f\in A(F)$ with no zeros in $F$, there exists a $g\in A(F)$ satisfying $e^g=f$ in $F$. Then $F$ is an Arakelian set.
\end{prp}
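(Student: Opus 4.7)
I would prove Proposition \ref{prop:FNP} by contrapositive, splitting on which condition of Definition \ref{def:Arak} fails. In both cases the mechanism is identical: assuming $g\in A(F)$ satisfies $e^g=f$ on $F$ for the counterexample $f$ constructed, Lemma \ref{lem:Nestor} (applied to a compact exhaustion of $F$, and glued via the fact that two holomorphic logarithms of $f$ on an open set differ by a locally constant function in $2\pi i\mathbb{Z}$) extends $g$ to a holomorphic $\tilde g$ on some open $W\supset F$ with $e^{\tilde g}=f$ on $W$. If one can then exhibit a closed rectifiable curve $\gamma\subset W$ enclosing a zero of $f$, single-valuedness of $\tilde g$ on $W$ yields the contradiction
\[
0=\oint_\gamma\tilde g'(z)\,dz=\oint_\gamma\frac{f'(z)}{f(z)}\,dz=2\pi i\,N,\qquad N\geq 1.
\]

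If $F$ has a hole $B$, take $\zeta\in B$ and set $f(z)=z-\zeta$, which is entire and zero-free on $F$. Lemma \ref{lem:Nestor} extends $g$ to a neighborhood $W$ of the compact $\partial B\subset F$; Lemma \ref{lem:filling} (with $U=B$) gives the simply connected filling $V$ of $B$ with $\partial V\subset\partial B\subset F$; and Lemma \ref{lem:Jordan} supplies an analytic Jordan curve $\gamma\subset V$ arbitrarily close to $\partial V$, hence contained in $W$, and enclosing $\zeta$. This yields the contradiction above.

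If $F$ has no holes but condition 2 fails, Remark \ref{rem:K} provides $n\in\mathbb{N}$ such that $F\cup\overline{D}_0(n)$ has unboundedly many bounded holes $B_k$; pick $\zeta_k\in B_k$ with $|\zeta_k|\to\infty$ (and $|\zeta_k|>n$). The Weierstrass factorization theorem then produces an entire $f$ whose zero set is exactly $\{\zeta_k\}_{k\in\mathbb{N}}$ (simple zeros), so $f\in A(F)$ is zero-free on $F$. After extending $g$ to $\tilde g$ on $W\supset F$ as above, the task is to find some $k$ and a Jordan curve $\gamma_k\subset W$ enclosing $\zeta_k$. Let $V_k$ denote the filling of $B_k$ (Lemma \ref{lem:filling}); it is a bounded simply connected domain containing $\zeta_k$ with $\partial V_k\subset F\cup\overline{D}_0(n)$. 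Because $\overline{D}_0(n)$ is connected and $V_k$ is open, either $\overline{D}_0(n)\subset V_k$ or $V_k\subset\{|z|>n\}$. In the former alternative, $\partial V_k\cap\overline{D}_0(n)=\emptyset$ forces $\partial V_k\subset F$, and Lemma \ref{lem:Jordan} supplies $\gamma_k\subset V_k$ close to $\partial V_k$ (so inside $W$) enclosing $\zeta_k$, finishing the proof via the winding-number contradiction.

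The main obstacle is ruling out the second alternative for every $k$. If $V_k\subset\{|z|>n\}$ for all sufficiently large $k$, then $\partial V_k\subset F\cup\{|z|=n\}$, and I expect the proof to exploit condition 1 together with $|\zeta_k|\to\infty$ to force the filling to absorb $\overline{D}_0(n)$ for at least one $k$. The motivating observation is that if $\partial V_k$ avoided the circle $\{|z|=n\}$ entirely, then $\partial V_k\subset F$ and $V_k\cap F^c$ would be a bounded connected component of $F^c$, i.e.\ a hole of $F$, contradicting condition 1; the remaining case, in which $\partial V_k$ actually touches the circle, requires a careful geometric argument (possibly using the flexibility of Remark \ref{rem:K} to adjust $n$, or a deformation of the outer boundary of $V_k$ away from $\{|z|=n\}$) to derive the same contradiction. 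Once this geometric step is secured, the rest of the argument is the standard winding-number computation via Lemmas \ref{lem:Car}--\ref{lem:Jordan}.
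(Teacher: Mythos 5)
Your Step 1 (assuming $F$ has a hole $B$, taking $f(z)=z-\zeta$, extending $g$ near $\partial B$ via Lemma \ref{lem:Nestor}, and deriving a winding-number contradiction via Lemmas \ref{lem:filling}--\ref{lem:wind.0}) is exactly the paper's argument and is correct. Your Step 2 starts correctly: if condition 2 fails, pick holes $B_k$ of $F\cup\overline{D}_0(n_0)$ and $\zeta_k\in B_k$ with $\zeta_k\to\infty$, and let $f$ be the Weierstrass entire function with simple zeros exactly at the $\zeta_k$. The dichotomy you state for the filling $V_k$ of $B_k$ (either $\overline{D}_0(n_0)\subset V_k$, hence $\partial V_k\subset F$, or $V_k\subset\{|z|>n_0\}$) is also correct, and you rightly identify that the crux of the whole proof is the second alternative, where $\partial V_k$ may meet the circle $S_{n_0}=\{|z|=n_0\}$ at points not in $F$, so that a neighbourhood extension of the logarithm obtained from $F$ alone need not cover a Jordan curve near $\partial V_k$.

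However, you explicitly leave this crux unresolved, offering only speculation (adjusting $n$ via Remark \ref{rem:K}, ``deforming'' the outer boundary away from the circle, or ``forcing'' the filling to absorb $\overline{D}_0(n_0)$). None of these is what the paper does, and none obviously works: in general the boundary of every filling $V_k$ can meet $S_{n_0}$ outside $F$, and there is no reason the filling should ever absorb the disc. The missing idea is entirely different in nature. The paper extends $g$ via Lemma \ref{lem:Nestor} not just off $F$ but off the compact set $\widetilde K=F\cap\overline{D}_0(n_0+1)$, obtaining a holomorphic logarithm $\widetilde g$ of $f$ on an open $U\supset\widetilde K$; it then glues $g$ (on $F$) with $\widetilde g$ (on $\overline{\bigcup_{w\in K}D_w(\delta)}$, where $K=F\cap S_{n_0}$ and $\delta>0$ is small) into a single continuous logarithm $h$ of $f$ on $F\cup\overline{\bigcup_{w\in K}D_w(\delta)}$. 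Then comes the key quantitative step you are missing: the sets $\Gamma_i=(\partial B_i\cap S_{n_0})\setminus F$ are pairwise disjoint open subsets of the circle whose endpoints lie in $K$, so $\sum_i\lambda(\Gamma_i)\le 2\pi n_0<\infty$; hence some $\Gamma_{i_0}$ has measure $<\delta/10^3$, which forces $\Gamma_{i_0}\subset\overline{\bigcup_{w\in K}D_w(\delta)}$ and therefore $\partial B_{i_0}$ lies entirely in the domain of $h$. One more application of Lemma \ref{lem:Nestor} around $\partial B_{i_0}$ and the Jordan-curve argument as in Step 1 then yields the contradiction. Without this gluing-plus-measure argument your proof does not close, so as written it has a genuine gap at precisely the point you flag as ``requires a careful geometric argument.''
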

\begin{proof}
{\it Step 1. $F$ has no holes.} Arguing by contradiction, suppose that $F$ has a hole $B$. Let $\zeta\in B$ and let $f(z)=z-\zeta$. Since $f$ has no zeros in $F$, there exists $g\in A(F)$, such that $e^g=f$ in $F$.  

Notice that $\partial B\subset F$ is compact. By Lemma \ref{lem:Nestor}, there exists an open neighborhood $U$ of $\partial B$, such that $g:U\to\mathbb{C}$ is extendible as a holomorphic function and satisfying $e^g=f$ in $U$. Consider $V$ to be the filling of $B$ and $\gamma\subset V$ an analytic Jordan curve, furnished by Lemma \ref{lem:Jordan}, enclosing $\zeta$ and being sufficiently close to $\partial V\subset \partial B$, such that $\gamma\subset U$. By Lemma \ref{lem:wind.0}, 
\begin{align*}
\frac{1}{2\pi i}\int_\gamma\frac{f'(z)}{f(z)}dz=0.
\end{align*}
On the other hand, by Cauchy's integral formula, the preceding LHS must be equal to 1, provided we give $\gamma$ a counter clockwise orientation. This completes the contradiction argument, which shows that $F$ has no holes.

{\it Step 2. The union of all holes of $F\cup \overline{D}_0(n)$ is bounded, for any $n\in\mathbb{N}$.} We argue again by contradiction. Suppose that there exists $n_0\in\mathbb{N}$, such that the union of all holes of $F\cup \overline{D}_0(n_0)$ is unbounded.\footnote{Note that if this is true, then the same will hold for all $n\ge n_0$, but it is not needed in the proof.} It follows that there exists a sequence of holes $B_i$ of $F\cup \overline{D}_0(n)$, $i\in\mathbb{N}$, and a sequence of points $\zeta_i\in B_i$, such that $\zeta_i\rightarrow\infty$. By the Weierstrass factorization theorem, there exists an entire function $f:\mathbb{C}\to\mathbb{C}$, vanishing exactly at the points $\zeta_i$ with multiplicity 1. Since $f(z)\neq 0$, for any $z\in F$, by assumption, there exists a function $g\in A(F)$, such that $e^g=f$ in $F$.

Let $S_{n_0}=\partial \overline{D}_0(n_0)$ and let $K=F\cap S_{n_0}$. Also, let $\widetilde{K}=F\cap \overline{D}_0(n_0+1)\supset K$. By Lemma \ref{lem:Nestor}, there exists a neighborhood $U\supset \widetilde{K}$ and an extension $\widetilde{g}:U\to\mathbb{C}$ of the restriction $g\big|_{\widetilde{K}}$, such that $\widetilde{g}$ is holomorphic and $e^{\widetilde{g}}=f$ in $U$. Let $2\delta=\min\{\text{dist}(K,U^c),1\}$ and consider the function 
\begin{align}\label{h}
h(z)=\left\{\begin{array}{ll}
g(z),& z\in F\\
\widetilde{g}(z),& z\in \overline{\cup_{w\in K}D_w(\delta)}
\end{array}\right.
\end{align}

{\it Claim:} $h(z)$ is continuous, well-defined, and satisfying $e^h=f$ in its domain of definition. 

{\it Proof of claim:}
Let $z\in F\cup\big(\overline{\cup_{w\in K}D_w(\delta)}\big)$. If $|z|>n_0+\frac{1}{2}$, then $z\in F\setminus \big(\overline{\cup_{w\in K}D_w(\delta)}\big)$ and $h(w)=g(w)$, for all $w$ sufficiently close to $z$. Hence, $h$ is single valued and continuous at $z$. On the other hand, if $|z|\leq n_0+\frac{1}{2}$, then for all $w$ sufficiently close to $z$, we have that 
\begin{align*}
h(w)=\left\{\begin{array}{ll}
g(w),& \text{if $w\in \widetilde{K}$}\\
\widetilde{g}(w),& \text{if $w\in \overline{\cup_{w\in K}D_w(\delta)}$}
\end{array}\right.=\widetilde{g}(w),
\end{align*}
since $w$ cannot belong in $F\setminus\widetilde{K}$. 
Hence, $h$ is single valued and continuous at $z$. The fact that $e^h=f$ in its domain of definition is obvious, since both $g,\widetilde{g}$ are logarithms of $f$ in their respective domains of definition. This proves our claim.


Notice that $\partial B_i\subset F\cup\overline{D}_0(n_0)$. Let $\Gamma_i=(\partial B_i\cap S_{n_0})\setminus F$. First, we argue that $\Gamma_i$ is open in the relative topology of the circle $S_{n_0}$. Indeed, if $z\in \Gamma_i\subset F^c$, then there exists $D_z(\varepsilon)\subset F^c$. Since $[D_z(\varepsilon)\setminus \overline{D}_0(n_0)]\cap B_i\neq\emptyset$ and $[D_z(\varepsilon)\setminus \overline{D}_0(n_0)]\cup B_i\subset [F\cup\overline{D}_0(n_0)]^c$ is connected, we conclude that $D_z(\varepsilon)\setminus \overline{D}_0(n_0)\subset B_i$. Hence, $D_z(\varepsilon)\cap S_{n_0}\subset \Gamma_i$. We thus have that $\Gamma_i$ is a countable (not necessarily finite) union of open arcs $A_{i,m}$, $m\in\mathbb{N}$. A similar argument also implies that $\Gamma_i\cap \Gamma_j=\emptyset$, for every $i\neq j$. Otherwise, there is a common arc $A\subset \Gamma_i\cap \Gamma_j\subset S_{n_0}$ and by choosing $z\in A$, $D_z(\varepsilon)$ appropriately, we conclude that $D_z(\varepsilon)\setminus \overline{D}_0(n_0) \subset B_i\cap B_j$, which cannot be since $B_i,B_j$ are disjoint. Lastly, the endpoints $z_{i,m}^\pm\in S_{n_0}$ of each arc $A_{i,m}$ must belong to $F$, otherwise we choose again a suitable disc $D_{z_{i,m}^\pm}(\varepsilon)$ and show that $A_{i,m}$ can be extended further from the given endpoint in the corresponding direction along $S_{n_0}$. 

To summarize the last paragraph, we have argued that the traces of the boundaries of all $B_i$'s on $S_{n_0}\setminus F$, consist of the pairwise disjoint sets $\Gamma_i$, each of which consists of disjoint open arcs $A_{i,m}$ of $S_{n_0}$, having endpoints $z_{i,m}^\pm\in F\cap S_{n_0}=K$. If $\lambda$ is the 1-dimensional Lebesgue measure on $S_{n_0}$, we then have that 
\begin{align}\label{Gamma.i.Leb}
\sum_{i=1}^{+\infty}\lambda (\Gamma_i)=\sum_{i,m=1}^{+\infty}\lambda (A_{i,m})\leq 2n_0\pi. 
\end{align}
In particular, there exists an $i_0$ such that $\lambda (\Gamma_{i_0})<\frac{\delta}{10^3}$. It follows that $A_{i_0,m}\subset D_{z_{i_0,m}^\pm}(\delta)$ for every $m$ and hence, $\Gamma_{i_0}\subset \overline{\cup_{w\in K}D_w(\delta)}$. We conclude that the whole boundary of $B_{i_0}$ is contained in $F\cup\big(\overline{\cup_{w\in K}D_w(\delta)}\big)$, where the extended logarithmic branch \eqref{h} of $f$ is well-defined.\footnote{In fact, all $\partial B_i$'s will eventually lie in the domain of $h$, but it is not needed in the proof.}

The contradiction argument is now completed by arguing for the most part as in {\it Step 1}. Employing Lemma \ref{lem:Nestor} once more, we further extend $h$ to a neighborhood $G$ of $\partial B_{i_0}$, such that $h: G\to\mathbb{C}$ is holomorphic and $e^h=f$ in $G$. If $V$ is the filling of $B_{i_0}$, by Lemma \ref{lem:Jordan}, there exists an analytic Jordan curve sufficiently close to $\partial V\subset \partial B_{i_0}$, such that $\gamma$ encloses $\zeta_{i_0}$ and $\gamma \subset G$. By Lemma \ref{lem:wind.0}, 
\begin{align*}
\frac{1}{2\pi i}\int_\gamma\frac{f'(z)}{f(z)}dz=0.
\end{align*}
However, by the argument principle, the previous LHS is equal to the number of zeros minus the number of poles of $f$, enclosed by $\gamma$, which is larger or equal to 1 in the present case (if we give $\gamma$ a counter clockwise orientation). This completes the proof of the proposition.
\end{proof}


\begin{thebibliography}{99}

\bibitem{Ar} N. U. Arakelian, {\it Uniform approximation on closed sets by entire functions},
Izv. Akad. Nauk SSSR {\bf 28} (1964), 1187-1206 (Russian).
%
\bibitem{Ar2} N. U. Arakelian, 
{\it Uniform and tangential approximations by analytic functions},
Izv. Akad. Nauk SSR {\bf 3} (1968), 273-285 (Russian). Translation in American Mathematical
Society Translations (2) {\bf 122} (1984), 85-97.

\bibitem{BG}
A. Boivin and P. M. Gauthier, 
{\it Holomorphic and harmonic approximation on Riemann surfaces, in
Approximation}, Complex Analysis and Potential Theory (N. Arakelian and E M. Gauthier, eds.),
NATO ASI Series II/37, Kluwer, Dordrecht, Boston, London, 2001, pp. 107-128.


%
%
%
%
%
\bibitem{F} G. Fournodavlos, 
{\it On a characterization of Arakelian sets}, 
Izv. Nats. Akad. Nauk Armenii Mat. {\bf 47} (2012), no. 6, 3-18.
%
%
\bibitem{Gaier} D. Gaier, {\it Lectures on complex approximation}, 
Birkh$\ddot{a}$user, 1987.

%
%
%
\bibitem{GH} P. M. Gauthier and W. Hengartner, 
{\it Uniform approximation on closed sets by
functions analytic on a Riemann surface}, Approx. Theory
(Proc. Conf. Inst. Math., Adam Mickiewicz Univ., Poznan, 1972), Reidel, Dordrecht, 1975, p. 63-69.
%
\bibitem{GP} P. M. Gauthier and M. R. Pouryayevali, 
{\it Approximation by Meromorphic Functions with Mittag-Leffler Type Constraints}, Can. Math. Bull. {\bf 44} (2001), 420-428.
%
%
\bibitem{Got} P. M. Got'e, 
{\it Remarks on a theorem of Keldysh and Lavrent'ev} (Russian).
Translated from the English by A. Nersesyan. Uspekhi Mat. Nauk 40 (1985), no. 4(244), p. 157-158.
%
\bibitem{Mer} S. N. Mergelyan,
{\it Uniform approximations to functions of a complex variable},
Uspehi Mat. Nauk (N. S.) 7, no. 2 (48), 1952, p. 31-122 (Russian). Translation in
Amer. Math. Soc. Transl. Ser. 1, Vol. 3, 1962, p. 294-391.
%
%
\bibitem{RR} J.-P. Rosay and W. Rudin, 
{\it Arakelian's approximation theorem},
Amer. Math. Monthly {\bf 96} (1989), 432-434.
%
%
%
\bibitem{Run} C. Runge, 
{\it Zur Theorie der eindeutigen analytischen Functionen}, Acta Math. {\bf 6} (1885), 229-244.
%
\bibitem{Sch} S. Scheinberg, 
{\it Uniform approximation by functions analytic on a Riemann surface},
Annals of Math. {\bf 108} (1978), 257-298.
%
%


\end{thebibliography}
\end{document}